\newtheorem{theorem}{Theorem}[section]
\newtheorem{proposition}[theorem]{Proposition}
\newtheorem{corollary}[theorem]{Corollary}
\author{Yongtao  Li}
\address{ Yongtao Li, College of Mathematics and Econometrics, Hunan University, 
Changsha, Hunan, 410082, P.R. China}
\email{\url{ytli0921@hnu.edu.cn} }
\author{Yang Huang}
\address{ Yang Huang, School of Mathematics and Statistics, Central South University, 
Changsha, Hunan, 410083, P.R. China}
\email{\url{FairyHuang@csu.edu.cn} }
\author{Lihua Feng}
\address{Lihua Feng, School of Mathematics and Statistics, Central South University, 
Changsha, Hunan, 410083, P.R. China}
\email{\url{fenglh@163.com} }
\author{Weijun Liu}
\address{Weijun Liu, School of Mathematics and Statistics, Central South University, 
Changsha, Hunan, 410083, P.R. China}
\email{\url{wjliu6210@126.com}}
\keywords{Block matrices; Positive semidefinite; 
Positive partial transpose; Partial trace;    }
\subjclass[2010]{47B65, 15B42, 15A45}
\date{\today}
\begin{document}

\title[Applications of two maps]{Some applications of two completely copositive maps}

\begin{abstract}
A linear map $\Phi :\mathbb{M}_n \to \mathbb{M}_k$ 
is called completely copositive if the resulting matrix $[\Phi (A_{j,i})]_{i,j=1}^m$ 
is positive semidefinite  for any integer $m$ and 
 positive semidefinite matrix $[A_{i,j}]_{i,j=1}^m$. 
In this paper, 
we present some applications of the completely copositive maps 
$\Phi (X)=(\mathrm{tr} X)I+X$ and $\Psi (X)=(\mathrm{tr} X)I-X$. 
Some new extensions about traces inequalities of 
positive semidefinite $3\times 3$ block matrices are included. 
\end{abstract}
\maketitle

\section{Introduction}

The space of $m\times n$ complex matrices is denoted by $\mathbb{M}_{m\times n}$. 
If $m=n$, we use $\mathbb{M}_n$ instead of $\mathbb{M}_{n\times n}$ and 
if $n=1$, we use $\mathbb{C}^m$ instead of $\mathbb{M}_{m\times 1}$. 
The  identity matrix of $\mathbb{M}_n$ is denoted by $I_n$, or simply by $I$ 
if no confusion is possible. 
We use $\mathbb{M}_m(\mathbb{M}_n)$ for the set of $m\times m$ block matrices 
with each block in $\mathbb{M}_n$. 
Let $X\otimes Y$ denote the Kronecker product of $X,Y$, that is, 
if $X=[x_{ij}]\in \mathbb{M}_m$ and $Y\in \mathbb{M}_n$, then 
$X\otimes Y\in \mathbb{M}_m(\mathbb{M}_n)$ whose $(i,j)$ block is $x_{ij}Y$. 
By convention, if $X\in \mathbb{M}_n$ is positive semidefinite, we write $X\ge 0$. 
For two Hermitian matrices $A$ and $B$ of the same size, $A\ge B$ means $A-B\ge 0$.  

Now we introduce the definition of the partial transpose and partial traces. 
For any $A=[A_{i,j}]_{i,j=1}^m\in \mathbb{M}_m(\mathbb{M}_n)$, 
the {\it   usual transpose} of $A$ is defined to be 
\[ A^T=\begin{bmatrix} A_{1,1}^T & \cdots & A_{m,1}^T \\
 \vdots & \ddots & \vdots \\
A_{1,m}^T & \cdots & A_{m,m}^T \end{bmatrix}. \]
We define the {\it  partial tranpose} of $A$ by 
\[ A^{\tau}=\begin{bmatrix} A_{1,1} & \cdots & A_{m,1} \\
 \vdots & \ddots & \vdots \\
A_{1,m} & \cdots & A_{m,m} \end{bmatrix}. \]
It is clear that $A\ge 0$ does not necessarily imply $A^{\tau}\ge 0$. 
If both $A$ and $A^{\tau}$ are positive semidefinite, 
then $A$ is said to be the positive partial tranpose (or PPT for short).

Given $A=[A_{i,j}]_{i,j=1}^m\in \mathbb{M}_m(\mathbb{M}_n)$, 
we next introduce the definition of two partial traces $\mathrm{tr}_1A$ and $\mathrm{tr}_2A$. 
There are several equivalent ways to explain the partial traces, 
and we recommend the recent monographs \cite{Petz08} and 
\cite[pp.120--121]{Bhatia07} for a comprehensive survey of the subject. 
For notational convenience, we define two partial traces  of $A$ by 
\begin{align*} 
\mathrm{tr}_1A &=\sum\limits_{i=1}^m A_{i,i}, \\
\mathrm{tr}_2 A & =[\mathrm{tr} A_{i,j}]_{i,j=1}^m, 
\end{align*} 
where $\mathrm{tr} X$ stands for the usual trace of $X$. 

Recall that a linear map $\Phi: \mathbb{M}_n\to \mathbb{M}_k$ is called positive if it maps positive matrices 
to positive matrices. 
A linear map $\Phi: \mathbb{M}_n\to \mathbb{M}_k$  is said to be $m$-posotive if 
for $[A_{i,j}]_{i,j=1}^m\in \mathbb{M}_m(\mathbb{M}_n)$, 
\begin{equation}  \label{eq1}
[A_{i,j}]_{i,j=1}^m \ge 0 \Rightarrow [\Phi (A_{i,j})]_{i,j=1}^m\ge 0. 
\end{equation}
It is said to be completely positive 
if (\ref{eq1}) holds for any integer $m\ge 1$. 
It is well known that the trace map and the determinant map 
are both completely positive, see, e.g., \cite[p. 221, p. 237]{Zhang11}. 
On the other hand, a linear map  $\Phi $  is said to be $m$-coposotive if 
for $[A_{i,j}]_{i,j=1}^m\in \mathbb{M}_m(\mathbb{M}_n)$, 
\begin{equation}  \label{eq2}
[A_{i,j}]_{i,j=1}^m \ge 0 \Rightarrow [\Phi (A_{j,i})]_{i,j=1}^m\ge 0,  
\end{equation}
and $\Phi$ is said to be completely copositive 
if (\ref{eq2}) holds for any positive integer $m\ge 1$. 
Furthermore, 
$\Phi$ is called a completely PPT map if it is completely positive and completely copositive. 
A comprehensive survey of the standard results on completely positive maps can be found in 
\cite[Chapter 3]{Bhatia07} or \cite{Paulsen02}. 

This paper centers on the application of the following result (Theorem \ref{thm1}) 
due to Lin \cite[Theorem 1.1]{Lin14} and \cite[Proposition 2.1]{Lin16}.  
We provide an alternatively elegant  proof here. 

\begin{theorem} \label{thm1}
(see \cite{Lin14,Lin16}) 
The maps $\Phi (X)=(\mathrm{tr}X)I+ X$ 
and $\Psi (X)= (\mathrm{tr}X)I-X$ are completely copositive. 
\end{theorem}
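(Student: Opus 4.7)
My plan is to reformulate complete copositivity via Choi matrices and then read off positivity from a one-line computation. Recall that Choi's theorem identifies the complete positivity of a linear map $\Lambda : \mathbb{M}_n \to \mathbb{M}_n$ with the positive semidefiniteness of its Choi matrix $C_\Lambda = \sum_{i,j=1}^n E_{ij} \otimes \Lambda(E_{ij})$, where $E_{ij}$ denotes the standard matrix units of $\mathbb{M}_n$. Since $\Phi$ is completely copositive precisely when the composition $X \mapsto \Phi(X^T)$ is completely positive, the theorem reduces to verifying
\[ \sum_{i,j=1}^n E_{ij} \otimes \Phi(E_{ji}) \ge 0 \quad \text{and} \quad \sum_{i,j=1}^n E_{ij} \otimes \Psi(E_{ji}) \ge 0. \]

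The main computation is immediate. Since $\mathrm{tr}(E_{ji}) = \delta_{ij}$, we have $\Phi(E_{ji}) = \delta_{ij} I_n + E_{ji}$ and $\Psi(E_{ji}) = \delta_{ij} I_n - E_{ji}$, so the two matrices above evaluate respectively to $I_{n^2} + F$ and $I_{n^2} - F$, where $F = \sum_{i,j} E_{ij} \otimes E_{ji}$ is the swap (flip) operator on $\mathbb{C}^n \otimes \mathbb{C}^n$ characterized by $F(u\otimes v) = v \otimes u$. Because $F^2 = I$, the eigenvalues of $F$ lie in $\{\pm 1\}$, so both $I + F$ and $I - F$ are positive semidefinite (with spectrum contained in $\{0, 2\}$).

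The only delicate point is the initial reformulation, which I expect to be the main obstacle in making the argument watertight. I would make explicit why $\Phi$ is completely copositive exactly when $\sum_{i,j} E_{ij} \otimes \Phi(E_{ji}) \ge 0$: given a PSD block matrix $A = [A_{ij}]$ written as $A = \sum_k v_k v_k^*$ and a test vector $y = (y_1,\dots,y_m)$, a change of variables identifies the quadratic form $y^* [\Phi(A_{ji})] y$ with a nonnegative combination of quadratic forms of $\sum_{i,j} E_{ij} \otimes \Phi(E_{ji})$ evaluated on auxiliary vectors built from the coordinates of the $v_k$ and $y$. Once this equivalence is in place, the swap-operator observation completes the proof for both $\Phi$ and $\Psi$ simultaneously, with no further calculation required.
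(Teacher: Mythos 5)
Your proof is correct and follows essentially the same route as the paper: both reduce complete copositivity to the positive semidefiniteness of the single Choi-type matrix $[\Phi(E_{j,i})]_{i,j}$ (equivalently $\sum_{i,j}E_{ij}\otimes\Phi(E_{ji})$) and then verify that one matrix directly. The only difference is the final step, where you identify this matrix as $I_{n^2}\pm F$ with $F$ the swap operator and invoke $F^2=I$, while the paper observes that the same matrix is Hermitian and row diagonally dominant with nonnegative diagonal; both verifications are immediate and equally valid.
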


\begin{proof}
Here we use a standard method from Choi \cite{Choi75}. 
It is sufficient to show that for any positive integer $m$, 
\[ \left[\Phi (E_{j,i})\right]_{i,j=1}^m \ge 0 \text{~~and~~}   
\left[\Psi (E_{j,i}) \right]_{i,j=1}^m \ge 0, \]
where $E_{j,i}\in \mathbb{M}_n$ is the matrix with $1$ in the $(j,i)$-th entry and $0$ elsewhere. 
Since both $[\Phi (E_{j,i})]_{i,j=1}^m$ and $[\Psi (E_{j,i})]_{i,j=1}^m$ are Hermitian (symmetric), 
row diagonally dominant with non-negative diagonal entries, this yields
\[ [\Phi (E_{j,i})]_{i,j=1}^m \ge 0 \text{~~and~~}   
[\Psi (E_{j,i})]_{i,j=1}^m \ge 0. \]
So we  complete the proof. 
\end{proof}

We remark that $\Phi (X)=(\mathrm{tr}X)I+ X$ is apparently a completely positive map, 
therefore $\Phi$ is a completely PPT map.  
However, the map  $\Psi (X)= (\mathrm{tr}X)I-X$ is not completely positive 
since it is even not $2$-positive (see \cite{Choi72}),  
thus  $\Psi$ is not a completely PPT map. 

The paper is organized as follows. In Section \ref{sec2}, 
we show a partial traces inequality about PPT matrices 
based on the application of  Theorem \ref{thm1}. 
Some other recent results are implicitly included in our proof of Theorem \ref{thm2}. 
In Section \ref{sec3}, we provide a proof  of  a trace inequality 
that has been applied to quantum information, 
such as, the subadditivity of $q$-entropies and the separability of mixed states.
In the last of the third section, we give some unified extensions of some traces inequalities 
(Theorem \ref{thm8} and Theorem \ref{thm9} ).

\section{Inequalities related to Partial traces}
\label{sec2}

By the completely copositivity of $\Psi$ in  Theorem \ref{thm1}, 
we  get the following Theorem \ref{coro4}, 
which is the main result in \cite[Theorem 2]{Choi17}. 
Of course, the proof provided by  Choi is quite different and technical.

\begin{theorem} \label{coro4}
(see \cite[Theorem 2]{Choi17})
Let $A=[A_{i,j}]_{i,j=1}^m \in \mathbb{M}_m(\mathbb{M}_n)$  be positive semidefinite. Then 
\begin{align*}
 (\mathrm{tr}_2 A^{\tau})\otimes I_n \ge A^{\tau}, \\
I_m \otimes \mathrm{tr}_1 A^{\tau} \ge A^{\tau}.  
\end{align*}
\end{theorem}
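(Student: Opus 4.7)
The plan is to deduce both inequalities from Theorem~\ref{thm1} applied to appropriate block matrices. The first inequality is almost immediate: since $[A_{i,j}]_{i,j=1}^m\ge 0$, the $m$-copositivity of $\Psi(X)=(\mathrm{tr}X)I_n-X$ yields
\[
[\Psi(A_{j,i})]_{i,j=1}^m = [(\mathrm{tr}A_{j,i})I_n - A_{j,i}]_{i,j=1}^m \ge 0,
\]
and the left-hand side is exactly $(\mathrm{tr}_2 A^{\tau})\otimes I_n - A^{\tau}$.

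For the second inequality, I would apply Theorem~\ref{thm1} after interchanging the two tensor factors. Let $F:\mathbb{C}^m\otimes\mathbb{C}^n\to\mathbb{C}^n\otimes\mathbb{C}^m$ be the canonical swap; then $\tilde A:=FAF^{*}\ge 0$, and I would view $\tilde A$ as a block matrix $[\tilde A_{k,l}]_{k,l=1}^n$ in $\mathbb{M}_n(\mathbb{M}_m)$ with $(\tilde A_{k,l})_{i,j}=(A_{i,j})_{k,l}$. Applying the first inequality to $\tilde A$---equivalently, invoking the $n$-copositivity of $\Psi(X)=(\mathrm{tr}X)I_m-X$ provided by Theorem~\ref{thm1}---gives $(\mathrm{tr}_2\tilde A^{\tau})\otimes I_m - \tilde A^{\tau}\ge 0$ in $\mathbb{M}_n(\mathbb{M}_m)$. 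Conjugating back by $F^{*}$ (which preserves positivity) and identifying blocks, the resulting matrix in $\mathbb{M}_m(\mathbb{M}_n)$ works out to be the ordinary transpose of $I_m\otimes\mathrm{tr}_1 A - A^{\tau}$. Since the usual transpose preserves positive semidefiniteness, and since $\mathrm{tr}_1 A^{\tau}=\mathrm{tr}_1 A$, the second inequality follows.

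The main obstacle is entirely index bookkeeping: the flip $F$ exchanges $\mathrm{tr}_1$ and $\mathrm{tr}_2$ cleanly, but it does \emph{not} intertwine the outer partial transpose $\tau$ on $A$ with the outer partial transpose on $\tilde A$. Instead, conjugation by $F$ turns $A^{\tau}$ into the inner transpose of $\tilde A$ (that is, the transpose taken within each block individually). This mismatch is what produces the auxiliary ordinary transpose appearing in the step above, which is harmless precisely because $X\ge 0\Leftrightarrow X^T\ge 0$. Once this subtlety is dispatched, both inequalities in Theorem~\ref{coro4} reduce to two symmetric applications of Theorem~\ref{thm1}.
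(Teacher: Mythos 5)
Your proof is correct and follows essentially the same route as the paper: the first inequality is the direct translation of the complete copositivity of $\Psi$, and the second is obtained by conjugating with the swap of the two tensor factors (the paper's $\widetilde{A}$ construction). You even handle the one delicate point more carefully than the paper, which asserts $\widetilde{A}^{\tau}=\widetilde{A^{\tau}}$ although these two matrices in fact differ by an ordinary transpose --- exactly the harmless discrepancy you identify and dispatch via $X\ge 0\Leftrightarrow X^{T}\ge 0$.
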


\begin{proof}
By Theorem \ref{thm1}, $\Psi (X)=(\mathrm{tr}X)I-X$ is completely copositive, 
that is 
\[ [(\mathrm{tr}A_{j,i})I-A_{j,i} ]_{i,j=1}^m \ge 0.  \]
Under the above definition, we can write 
\begin{equation} \label{eq3}
 (\mathrm{tr}_2 A^{\tau})\otimes I_n \ge A^{\tau}. 
\end{equation}
We may assume that $A_{i.j}=\bigl[a_{r,s}^{i,j}\bigr]_{r,s=1}^n$, then we define 
$ \widetilde{A} \in \mathbb{M}_n (\mathbb{M}_m) $ by 
\[ \widetilde{A}= [B_{r,s}]_{r,s=1}^n, \]
where $B_{r,s}=\bigl[a_{r,s}^{i,j}\bigr]_{i,j=1}^m\in \mathbb{M}_m$. 
By a direct computation, we get 
\[ \mathrm{tr}_2 \widetilde{A} =
\left[ \mathrm{tr} \bigl[a_{r,s}^{i,j}\bigr]_{i,j=1}^m \right]_{r,s=1}^n =
\Bigl[ \begin{matrix} \sum\limits_{i=1}^m  a_{r,s}^{i,i}\end{matrix}  \Bigr]_{r,s=1}^n=
\sum\limits_{i=1}^m \left[ a_{r,s}^{i,i} \right]_{r,s=1}^n=\mathrm{tr}_1A,  \]
and for any $X=[x_{ij}]\in \mathbb{M}_m$ and $Y=[y_{rs}]\in \mathbb{M}_n$, since 
\[ X\otimes Y=[x_{ij}Y]_{i,j=1}^m =\left[ [x_{ij}y_{rs}]_{r,s=1}^n \right]_{i,j=1}^m. \]
Then, it follows that  
\[ \widetilde{X\otimes Y}=\left[ [x_{ij}y_{rs}]_{i,j=1}^m \right]_{r,s=1}^n
=\left[ y_{rs}X \right]_{r,s=1}^n=Y\otimes X.  \]
Replacing $A$ with $\widetilde{A}$ in (\ref{eq3}), we get 
$ (\mathrm{tr}_2 \widetilde{A}^{\tau} )\otimes I_m \ge \widetilde{A}^{\tau}$, 
that is 
\begin{equation} \label{eq5}
 I_m \otimes \mathrm{tr}_1 A^{\tau} = I_m \otimes \mathrm{tr}_2 \widetilde{A^{\tau}} =
\widetilde{(\mathrm{tr}_2 \widetilde{A}^{\tau} )\otimes I_m} \ge 
\widetilde{\widetilde{A^{\tau}}\,\,}=A^{\tau},  
\end{equation}
in which we frequently use the fact that $ \widetilde{A}^{\tau} = \widetilde{A^{\tau}}$. 
\end{proof}

As a byproduct of our proof, we have the following Corollary \ref{coro3}, 
see \cite{Horo96} for details and references to the physics literature.  

\begin{corollary} \label{coro3}
(see \cite{Horo96}) 
Let  $A=[A_{i,j}]_{i,j=1}^m \in \mathbb{M}_m(\mathbb{M}_n)$  be PPT. Then 
\begin{align*}
 I_m\otimes (\mathrm{tr}_1A) \ge A, \\
(\mathrm{tr}_2A) \otimes I_n \ge A. 
\end{align*}
\end{corollary}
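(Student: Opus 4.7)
The plan is to reduce Corollary \ref{coro3} to Theorem \ref{coro4} by applying the latter to $A^{\tau}$ in place of $A$. The key observation is that the PPT hypothesis gives us two positive semidefinite matrices for free: both $A$ and $A^{\tau}$ are PSD. Theorem \ref{coro4} only requires the block matrix it is applied to be PSD (it is the partial transpose of that PSD matrix that appears on the right-hand side). So we are free to feed it $A^{\tau}$.

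First I would verify the trivial involutive identity $(A^{\tau})^{\tau}=A$: applying the partial transpose swaps the $(i,j)$ and $(j,i)$ blocks without altering the blocks themselves, so doing it twice returns the original matrix. Then, substituting $A^{\tau}$ for $A$ in the two inequalities supplied by Theorem \ref{coro4} yields
\begin{align*}
\bigl(\mathrm{tr}_2 (A^{\tau})^{\tau}\bigr)\otimes I_n &\ge (A^{\tau})^{\tau}, \\
I_m \otimes \mathrm{tr}_1 (A^{\tau})^{\tau} &\ge (A^{\tau})^{\tau},
\end{align*}
which, after collapsing the double partial transposes, are precisely $(\mathrm{tr}_2 A)\otimes I_n \ge A$ and $I_m \otimes (\mathrm{tr}_1 A)\ge A$.

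There is essentially no obstacle here; the proof is a one-line substitution once one notices that the hypothesis of Theorem \ref{coro4} is PSD (supplied by $A^{\tau}\ge 0$, i.e., the PPT condition) and that $A$ is recovered as the partial transpose of $A^{\tau}$. The only thing worth flagging explicitly in the writeup is that the PPT assumption is exactly what licenses applying the theorem to $A^{\tau}$, so the corollary should be read as a ``swap the roles of $A$ and $A^{\tau}$'' variant of Theorem \ref{coro4}.
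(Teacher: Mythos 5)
Your proposal is correct and is essentially identical to the paper's own proof: the paper also substitutes $A^{\tau}$ for $A$ in Theorem \ref{coro4}, using the PPT hypothesis to guarantee $A^{\tau}\ge 0$ and the involution $(A^{\tau})^{\tau}=A$ to recover the stated inequalities.
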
 

\begin{proof}
Since $A$ and $A^{\tau}$ are positive semidefinite, 
by replacing $A$ with $A^{\tau}$ in Theorem \ref{coro4}, we get the desired results. 
\end{proof}

By combining Theorem \ref{coro4} and Corollary \ref{coro3},
we immediately obtain  the following partial traces inequality. 

\begin{corollary} \label{thm2}
Let $A=[A_{i,j}]_{i,j=1}^m \in \mathbb{M}_m(\mathbb{M}_n)$  be PPT. Then 
\[ I_m \otimes (\mathrm{tr}_1A) + (\mathrm{tr}_2A) \otimes I_n \ge 2A, \]
and 
\[ I_m \otimes (\mathrm{tr}_1A^{\tau}) + (\mathrm{tr}_2A^{\tau}) \otimes I_n \ge 2A^{\tau}. \]
\end{corollary}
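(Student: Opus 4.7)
The plan is to observe that both inequalities are immediate consequences of the results already established, so the proof is essentially an addition of two pairs of inequalities. No new ingredient is needed.

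For the first inequality, I would invoke Corollary \ref{coro3}, which applies precisely because $A$ is assumed PPT. That corollary gives the two separate bounds $I_m\otimes (\mathrm{tr}_1 A)\ge A$ and $(\mathrm{tr}_2 A)\otimes I_n\ge A$. Since the sum of positive semidefinite matrices is positive semidefinite, adding the left-hand and right-hand sides of these two inequalities yields
\[ I_m\otimes (\mathrm{tr}_1 A) + (\mathrm{tr}_2 A)\otimes I_n \ge 2A, \]
which is exactly the first claim.

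For the second inequality, I would apply Theorem \ref{coro4} directly to $A$ itself, using that $A\ge 0$ (the hypothesis of Theorem \ref{coro4} requires only positive semidefiniteness of $A$, not of $A^{\tau}$). Theorem \ref{coro4} delivers both $(\mathrm{tr}_2 A^{\tau})\otimes I_n\ge A^{\tau}$ and $I_m\otimes \mathrm{tr}_1 A^{\tau}\ge A^{\tau}$. Adding these two inequalities gives
\[ I_m\otimes (\mathrm{tr}_1 A^{\tau}) + (\mathrm{tr}_2 A^{\tau})\otimes I_n \ge 2A^{\tau}, \]
as desired. Alternatively, one could note that PPT is preserved under $\tau$ (since $(A^{\tau})^{\tau}=A$), so $A^{\tau}$ is itself PPT, and the second inequality follows from the first inequality applied to $A^{\tau}$ in place of $A$.

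There is no real obstacle here; the only thing worth emphasizing in the write-up is the choice of which earlier result to invoke for each of the two inequalities, so that the hypothesis (PPT for the first, merely positive semidefinite plus PPT for the second) is used at the correct strength. I would keep the proof to two or three short sentences.
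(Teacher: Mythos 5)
Your proof is correct and matches the paper's intent exactly: the paper derives this corollary by "combining Theorem \ref{coro4} and Corollary \ref{coro3}," which is precisely your addition of the two pairs of inequalities, with the PPT hypothesis used for the first claim and mere positive semidefiniteness of $A$ sufficing for the second. Nothing is missing.
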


\begin{proposition}
Let $A \in \mathbb{M}_2(\mathbb{M}_n)$ 
be positive semidefinite. Then 
\[ I_2\otimes (\mathrm{tr}_1 A) +(\mathrm{tr}_2 A)\otimes I_n \le A + (\mathrm{tr} A)I_{2n}.  \]
\end{proposition}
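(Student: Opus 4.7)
The plan is to compute the difference $D := A+(\mathrm{tr}A)I_{2n}-I_2\otimes(\mathrm{tr}_1 A)-(\mathrm{tr}_2 A)\otimes I_n$ explicitly as a $2\times 2$ block matrix, and then identify it with the matrix $[\Psi(A_{j,i})]_{i,j=1}^2$ of Theorem \ref{thm1} (up to a trivial block congruence), where $\Psi(X)=(\mathrm{tr}X)I_n-X$ is now applied to all $n\times n$ blocks of $A$ (including the off-diagonal, not-necessarily-Hermitian ones, by linear extension).

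Writing $A=\begin{bmatrix}A_{11}&A_{12}\\ A_{21}&A_{22}\end{bmatrix}$, the block $(i,j)$ of $D$ is
\[
D_{ij}=A_{ij}+(\mathrm{tr}A)\delta_{ij}I_n-\delta_{ij}(A_{11}+A_{22})-(\mathrm{tr}A_{ij})I_n.
\]
A direct simplification (using $\mathrm{tr}A=\mathrm{tr}A_{11}+\mathrm{tr}A_{22}$) should give
\[
D=\begin{bmatrix}\Psi(A_{22}) & -\Psi(A_{12})\\ -\Psi(A_{21}) & \Psi(A_{11})\end{bmatrix}.
\]
This is the only computational step; I expect it to be routine.

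Next I would invoke Theorem \ref{thm1}: since $A\ge 0$ and $\Psi$ is completely copositive,
\[
M:=[\Psi(A_{j,i})]_{i,j=1}^{2}=\begin{bmatrix}\Psi(A_{11}) & \Psi(A_{21})\\ \Psi(A_{12}) & \Psi(A_{22})\end{bmatrix}\ge 0.
\]
Finally, set $U=\begin{bmatrix}0 & -I_n\\ I_n & 0\end{bmatrix}$. A one-line block computation yields $U^{*}MU=D$, so $D$ is congruent to a PSD matrix and therefore $D\ge 0$, which is exactly the claimed inequality.

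The only step that might require any care is the bookkeeping in verifying the identity for $D$ in the first paragraph; once that is in place, the rest is immediate from Theorem \ref{thm1} and the sign-flip congruence. In particular, no PPT hypothesis on $A$ is needed, in contrast with Corollary \ref{thm2}, because here $\Psi$ is applied \emph{directly} to the blocks $A_{j,i}$ rather than to $A^{\tau}$.
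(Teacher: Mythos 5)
Your proof is correct and follows essentially the same route as the paper: both compute the difference of the two sides blockwise, identify it as $\bigl[\Psi(A_{j,i})\bigr]_{i,j=1}^{2}$ conjugated by the block matrix $\begin{bmatrix}0 & -I_n\\ I_n & 0\end{bmatrix}$, and conclude from the complete copositivity of $\Psi$ in Theorem \ref{thm1}. The blockwise identity you flag as needing care does check out exactly as you state it.
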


\begin{proof}
We may assume that 
\[ A=\begin{bmatrix}B & C \\ C^* &D  \end{bmatrix},\] 
where $B,C,D\in \mathbb{M}_n$. The desired inequality is 
\begin{equation*} \begin{aligned} 
& \begin{bmatrix} B+D & 0 \\ 0 & B+D \end{bmatrix} +
\begin{bmatrix} (\mathrm{tr} B)I_n & (\mathrm{tr}C)I_n \\ 
(\mathrm{tr}C^*)I_n & (\mathrm{tr} D)I_n  \end{bmatrix} \\
&\quad \le  \begin{bmatrix}B & C \\ C^* &D  \end{bmatrix} + 
\begin{bmatrix}(\mathrm{tr} A)I_n & 0 \\ 0 & (\mathrm{tr} A)I_n  \end{bmatrix},
\end{aligned} \end{equation*}
which is equivalent to (note that $\mathrm{tr} A= \mathrm{tr}B +\mathrm{tr}D$)
\[ G:= \begin{bmatrix} (\mathrm{tr} D)I_n-D & C- (\mathrm{tr} C)I_n \\
 C^*-(\mathrm{tr} C^*)I_n & (\mathrm{tr} B)I_n- B  \end{bmatrix}\ge 0. \]
By Theorem \ref{thm1}, the map $\Psi (X)=(\mathrm{tr} X)I-X$ is completely copositive, 
\[ \begin{bmatrix} (\mathrm{tr} B)I_n- B & (\mathrm{tr} C^*)I_n-C^* \\
 (\mathrm{tr} C)I_n-C & (\mathrm{tr} D)I_n-D \end{bmatrix} \ge 0, \]
and then 
\[ G= \begin{bmatrix}0& -I_n \\ I_n &0   \end{bmatrix}
\begin{bmatrix} (\mathrm{tr} B)I_n- B & (\mathrm{tr} C^*)I_n-C^* \\
 (\mathrm{tr} C)I_n-C & (\mathrm{tr} D)I_n-D \end{bmatrix} 
\begin{bmatrix}0& I_n \\ -I_n &0   \end{bmatrix} \ge 0.\]
Hence we complete the proof. 
\end{proof}

We remark that if 
$A=[A_{i,j}]_{i,j=1}^m \in \mathbb{M}_m(\mathbb{M}_n)$  is positive semidefinite, 
by induction and the $2$-copositivity of $\Psi (X)=(\mathrm{tr} X)I-X$, one can show that 
\[  I_m \otimes (\mathrm{tr}_1 A ) + (\mathrm{tr}_2 A)\otimes I_n \le A + 
(\mathrm{tr} A) I_{m}\otimes I_n, \]
which was proved   by Ando \cite{Ando14} and independently by Lin \cite{Lin16}. 


\section{Inequalities relating to trace}  
\label{sec3}

Recently, Choi established the following partial trace inequalities [Corollary \ref{coro6}], 
which is the key result in  \cite[Theorem 2]{Choi17b}.   
Here we shall demonstrate that Corollary \ref{coro6} is actually a well application of 
the completely copositive $\Phi (X)= (\mathrm{tr} X)I+X$. 
In the sequel, 
we first give an alternatgive proof of Corollary \ref{coro6} based on Theorem \ref{thm1}. 
The Corollary \ref{coro7} can be found in \cite[Theorem 2.2]{Lin17}, 
here we  provide the proof for completeness 
using the completely copositivity of $\Psi (X)=(\mathrm{tr} X)I-X$. 
Some interesting consequences about trace are included. 

\begin{corollary} \label{coro6}
(see \cite{Choi17b})
Let $A \in \mathbb{M}_m(\mathbb{M}_n)$  be positive semidefinite. Then 
\begin{align*} 
(\mathrm{tr}_2 A^{\tau})\otimes I_n \ge -A^{\tau}, \\
I_m \otimes \mathrm{tr}_1 A^{\tau} \ge -A^{\tau}.  
\end{align*}
\end{corollary}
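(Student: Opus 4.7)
The plan is to mimic the proof of Theorem \ref{coro4} exactly, replacing the completely copositive map $\Psi(X)=(\mathrm{tr}X)I-X$ by $\Phi(X)=(\mathrm{tr}X)I+X$. Since Theorem \ref{thm1} gives that $\Phi$ is completely copositive, applying it to the positive semidefinite block matrix $A=[A_{i,j}]_{i,j=1}^m$ yields
\[ [(\mathrm{tr}A_{j,i})I_n+A_{j,i}]_{i,j=1}^m \ge 0. \]
Recognizing the left-hand side as $(\mathrm{tr}_2 A^{\tau})\otimes I_n + A^{\tau}$, this is precisely the first asserted inequality $(\mathrm{tr}_2 A^{\tau})\otimes I_n \ge -A^{\tau}$.

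For the second inequality, I would reuse the ``tilde'' reshuffling introduced in the proof of Theorem \ref{coro4}. Writing $A_{i,j}=[a_{r,s}^{i,j}]_{r,s=1}^n$ and defining $\widetilde{A}=[B_{r,s}]_{r,s=1}^n \in \mathbb{M}_n(\mathbb{M}_m)$ with $B_{r,s}=[a_{r,s}^{i,j}]_{i,j=1}^m$, we have the identities $\mathrm{tr}_2\widetilde{A}=\mathrm{tr}_1 A$, $\widetilde{X\otimes Y}=Y\otimes X$, and $\widetilde{A}^{\tau}=\widetilde{A^{\tau}}$, already verified in the proof of Theorem \ref{coro4}. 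Since $\widetilde{A}$ remains positive semidefinite (it is a unitary permutation of $A$), the first inequality applied to $\widetilde{A}$ gives $(\mathrm{tr}_2 \widetilde{A}^{\tau})\otimes I_m \ge -\widetilde{A}^{\tau}$. Applying the tilde operation to both sides and using that tilde preserves the order on Hermitian matrices (being a unitary congruence), we obtain
\[ I_m\otimes \mathrm{tr}_1 A^{\tau} = \widetilde{(\mathrm{tr}_2 \widetilde{A}^{\tau})\otimes I_m} \ge -\widetilde{\widetilde{A^{\tau}}} = -A^{\tau}, \]
which is the second inequality.

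There is no serious obstacle: once Theorem \ref{thm1} is in hand, the first inequality is immediate by unpacking the block notation, and the second inequality is obtained from the first by the exact same symmetry-swap argument that converted the first part of Theorem \ref{coro4} into its second part. The only mildly delicate point is to invoke the identities $\mathrm{tr}_2\widetilde{A}=\mathrm{tr}_1 A$, $\widetilde{X\otimes Y}=Y\otimes X$, and $\widetilde{A^{\tau}}=\widetilde{A}^{\tau}$ in the correct order, but since these were verified verbatim in the proof of Theorem \ref{coro4}, one can simply refer back to that computation rather than redoing it.
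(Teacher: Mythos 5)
Your proof is correct and follows essentially the same route as the paper: the first inequality is obtained exactly as in the paper by applying the completely copositive map $\Phi(X)=(\mathrm{tr}X)I+X$ to $A$ and unpacking the block notation. For the second inequality the paper merely appeals to ``symmetry of the definitions of $\mathrm{tr}_1$ and $\mathrm{tr}_2$''; your explicit tilde-reshuffling argument is precisely the formal content of that symmetry, already carried out verbatim in the paper's proof of Theorem \ref{coro4}, so the two proofs coincide in substance.
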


\begin{proof}
In view of symmetry of definitions of $\mathrm{tr}_1$ and $\mathrm{tr}_2$, 
we  only prove 
\[ (\mathrm{tr}_2 A^{\tau})\otimes I_n \ge -A^{\tau}. \] 
By Theorem \ref{thm1}, the map $\Phi (X)=(\mathrm{tr} X)I+X$ is completely copositive, 
then 
\[ [(\mathrm{tr} A_{j,i})I_n +A_{j,i}]_{i,j=1}^m \ge 0, \] 
which can be rewrite as $(\mathrm{tr}_2 A^{\tau})\otimes I_n \ge -A^{\tau}$. 
\end{proof}

\begin{corollary} \label{coro7}
(see \cite{Lin17})
Let $\begin{bmatrix}A & B \\ B^* &C  \end{bmatrix}\in \mathbb{M}_2(\mathbb{M}_n)$ 
be positive semidefinite. Then 
\begin{equation} \label{eq7}
 \begin{bmatrix}
(\mathrm{tr} C)A-BB^* & (\mathrm{tr} B^*)B-AC \\
(\mathrm{tr} B)B^*-CA & (\mathrm{tr} A)C -B^*B
\end{bmatrix} \ge 0. 
\end{equation}
Consequently, 
\begin{gather}  
\label{eq8}  
 \mathrm{tr}(AC) +\mathrm{tr}(B^*B) 
\le \mathrm{tr} A\mathrm{tr} C +|\mathrm{tr} B|^2, \\
\label{eq9}  
 \mathrm{tr}(B^*B) - \mathrm{tr}(AC) 
\le \mathrm{tr} A\mathrm{tr} C -|\mathrm{tr} B|^2.
\end{gather}

\end{corollary}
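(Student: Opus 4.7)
The plan is to realize the matrix in (\ref{eq7}), call it $N$, as a congruence $T^{*}QT$ with a middle factor $Q\ge 0$ that is produced directly by Theorem \ref{thm1}. First I would factor the given PSD matrix as $\begin{bmatrix}A&B\\ B^{*}&C\end{bmatrix}=V^{*}V$ with $V=[X,Y]$, so that $A=X^{*}X$, $B=X^{*}Y$, and $C=Y^{*}Y$.

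Next I would rewrite each block of $N$ as a product involving $\Psi$. Using $\mathrm{tr}(YY^{*})=\mathrm{tr}\,C$, $\mathrm{tr}(XY^{*})=\mathrm{tr}\,B^{*}$ and similar identities, a direct check gives
\[
(\mathrm{tr}\,C)A-BB^{*}=X^{*}\Psi(YY^{*})X,\qquad (\mathrm{tr}\,B^{*})B-AC=X^{*}\Psi(XY^{*})Y,
\]
together with the Hermitian-conjugate formulas for the $(2,1)$ and $(2,2)$ blocks. Hence
\[
N=\begin{bmatrix}X^{*}&0\\ 0&Y^{*}\end{bmatrix}\begin{bmatrix}\Psi(YY^{*})&\Psi(XY^{*})\\ \Psi(YX^{*})&\Psi(XX^{*})\end{bmatrix}\begin{bmatrix}X&0\\ 0&Y\end{bmatrix}.
\]
The key observation is that
\[
\widetilde{M}:=\begin{bmatrix}YY^{*}&YX^{*}\\ XY^{*}&XX^{*}\end{bmatrix}=\begin{bmatrix}Y\\ X\end{bmatrix}[Y^{*},X^{*}]\ge 0,
\]
so Theorem \ref{thm1} applied to $\widetilde{M}$ yields $[\Psi(\widetilde{M}_{j,i})]_{i,j=1}^{2}\ge 0$, which is precisely the middle factor above. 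Therefore $N\ge 0$ by congruence, proving (\ref{eq7}).

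For the consequences (\ref{eq8}) and (\ref{eq9}) I would apply the partial trace $\mathrm{tr}_{2}$, which preserves positivity, to $N$. A short computation gives
\[
\mathrm{tr}_{2}N=\begin{bmatrix}\alpha&\beta\\ \beta&\alpha\end{bmatrix}\ge 0,\qquad \alpha=\mathrm{tr}\,A\,\mathrm{tr}\,C-\mathrm{tr}(B^{*}B),\quad \beta=|\mathrm{tr}\,B|^{2}-\mathrm{tr}(AC),
\]
where $\beta$ is real because $A$ and $C$ are Hermitian. Positivity of this scalar $2\times 2$ matrix is equivalent to $\alpha\ge|\beta|$; the inequality $\alpha+\beta\ge 0$ is precisely (\ref{eq8}), and $\alpha-\beta\ge 0$ is precisely (\ref{eq9}).

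The main obstacle is the second step: one has to spot that the four blocks of $N$ all assemble into expressions of the type $X^{*}\Psi(\cdot)Y$ whose inner arguments $YY^{*}, XY^{*}, YX^{*}, XX^{*}$ are themselves the blocks of a rank-one PSD matrix, so that the copositivity of $\Psi$ supplied by Theorem \ref{thm1} applies in one stroke. Once this rearrangement is seen, the rest is bookkeeping.
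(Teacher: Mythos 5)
Your proof is correct and is essentially the paper's own argument: both factor the given block matrix as a Gram matrix, apply the complete copositivity of $\Psi$ from Theorem \ref{thm1} to the ``swapped'' Gram matrix built from $XX^*, XY^*, YX^*, YY^*$, and obtain (\ref{eq7}) as a congruence by $\mathrm{diag}(X,Y)$. The only cosmetic differences are the choice $A=X^*X$ versus the paper's $A=XX^*$, and that you deduce (\ref{eq8})--(\ref{eq9}) by applying the positivity-preserving partial trace $\mathrm{tr}_2$ to (\ref{eq7}), whereas the paper first uses the off-diagonal bound $P+R\ge \pm (Q+Q^*)$ and then takes the ordinary trace --- the two derivations are equivalent.
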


\begin{proof}
Since $\begin{bmatrix}Y^*Y & Y^*X \\ X^*Y & X^*X  \end{bmatrix}$ is positive semidefinite for 
any $p\times q$ matrices $X,Y$,   
by Theorem \ref{thm1}, the completely copositivity of $\Psi (X)=(\mathrm{tr} X)I-X$ yields 
\[ \begin{bmatrix} (\mathrm{tr} Y^*Y)I -Y^*Y &  ( \mathrm{tr} X^*Y )I-X^*Y \\ 
( \mathrm{tr} Y^*X )I-Y^*X  & ( \mathrm{tr} X^*X)I-X^*X  \end{bmatrix}\ge 0. \]
Now since $\begin{bmatrix}A & B \\ B^* &C  \end{bmatrix}$ is positive semidefinite, 
we may write 
\[ \begin{bmatrix}A & B \\ B^* &C  \end{bmatrix} = 
\begin{bmatrix}XX^* & XY^* \\ YX^* & YY^*  \end{bmatrix}, \]
for some $X,Y\in \mathbb{M}_{n\times 2n}$. We observe that 
\begin{align*}
 & \begin{bmatrix}
(\mathrm{tr} C)A-BB^* & (\mathrm{tr} B^*)B-AC \\
(\mathrm{tr} B)B^*-CA & (\mathrm{tr} A)C -B^*B
\end{bmatrix}  \\
&\quad =  \begin{bmatrix}
(\mathrm{tr} YY^*)XX^*-XY^*YX^* & (\mathrm{tr} YX^*) XY^*-XX^*YY^* \\
(\mathrm{tr} XY^*) YX^*- YY^*XX^* & (\mathrm{tr} XX^*)YY^* - YX^*XY^*
\end{bmatrix} \\
&\quad = 
\begin{bmatrix}X & 0 \\ 0 &Y  \end{bmatrix} 
\begin{bmatrix}
(\mathrm{tr} Y^*Y)I-Y^*Y & (\mathrm{tr} X^*Y) I-X^*Y \\
(\mathrm{tr} Y^*X) I- Y^*X & (\mathrm{tr} X^*X)I - X^*X
\end{bmatrix} 
\begin{bmatrix}X & 0 \\ 0 &Y  \end{bmatrix}^*.  
\end{align*}
Therefore, (\ref{eq7})   follows. Then 
\[ (\mathrm{tr} C)A-BB^* + (\mathrm{tr} A)C -B^*B \ge 
  \pm \bigl( (\mathrm{tr} B^*)B-AC + (\mathrm{tr} B)B^*-CA  \bigr). \]
By taking trace on both sides, it yields (\ref{eq8}) and (\ref{eq9}). 
\end{proof}

Positive semidefinite $2\times 2$ block matrices are well studied, 
such a partition leads to versatile and elegant theoretical inequalities, 
see \cite{Gumus18,Lin14,Lin15,Lin17} for details. 
However, an analogous partition into $3\times 3$ blocks matrices 
seems not to be extensively investigated. 
At the end of the paper, we will 
present several results related to positive semidefinite $3\times 3$ block matrices. 

Let $A$ be an $n\times n$ complex matrix. 
For index sets $\alpha ,\beta \subseteq \{1,2,\ldots ,n\}$, 
we denote by $A[\alpha, \beta]$ the submatrix of entries that lie in the rows of $A$ 
indexed by $\alpha$ and the columns indexed by $\beta$. 
If $\alpha =\beta$, the submatrix $A[\alpha ,\beta]=A[\alpha] $ 
is the principal submatrix of $A$. 
We denoted by $|\alpha|$ the cardinality of the index set $\alpha$.

Recently, Lin and van den Driessche proved 
a determinantal inequality (\ref{eqlin}), 
which is a refinement of the famous Kotelianskii's inequality 
(see, e.g., \cite{Jiang19}),
it states that for any positive semidefinite $A\in \mathbb{M}_n$ 
and $\alpha ,\beta \subseteq \{1,2,\ldots ,n\}$ with $|\alpha |=|\beta |$, then 

\begin{equation} \label{eqlin} \begin{aligned} 
&(\det A[\alpha \cup \beta ]) (\det A[\alpha \cap \beta ]) \\
&\quad \le (\det A[\alpha ])(\det A[ \beta ]) -\bigl| \det A[\alpha ,\beta ] \bigr|^2 .
\end{aligned} \end{equation}

The next two results Theorem \ref{thm8} and Theorem \ref{thm9} 
are extensions of Corollary \ref{coro7}, 
and it also can be regarded as the complement of (\ref{eqlin}).

\begin{theorem} \label{thm8}
Let $A\in \mathbb{M}_n$  be positive semidefinite and 
let $\alpha ,\beta \subseteq \{1,2,\ldots ,n\}$ such that $|\alpha |=|\beta|$. Then 
\begin{equation} \label{eq10} \begin{aligned}
 &  \mathrm{tr} \bigl( A[\alpha]A[\beta]  \bigr)  + 
\mathrm{tr} \bigl( A^*[\alpha, \beta] A[\alpha, \beta] \bigr)   \\
&\quad \le   \bigl( \mathrm{tr} A[\alpha] \bigr) \bigl(\mathrm{tr}A[\beta] \bigr) +  
\bigl| \mathrm{tr}A[\alpha ,\beta] \bigr|^2.
\end{aligned} \end{equation}
\end{theorem}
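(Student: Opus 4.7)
The plan is to reduce Theorem \ref{thm8} to the $2\times 2$ block inequality already established as (\ref{eq8}) in Corollary \ref{coro7}. The key observation is that for any $\alpha,\beta$ with $|\alpha|=|\beta|$, the submatrices $A[\alpha]$, $A[\beta]$, $A[\alpha,\beta]$, $A[\beta,\alpha]$ naturally assemble into the four blocks of a positive semidefinite matrix in $\mathbb{M}_2(\mathbb{M}_k)$ with $k=|\alpha|$.

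First I would use $A\ge 0$ to factor $A=X^*X$ for some matrix $X$, and denote by $X_\alpha$, $X_\beta$ the column submatrices of $X$ indexed by $\alpha$ and $\beta$ respectively. Then
\[ A[\alpha]=X_\alpha^*X_\alpha, \qquad A[\beta]=X_\beta^*X_\beta, \qquad A[\alpha,\beta]=X_\alpha^*X_\beta. \]
Consequently the block matrix
\[ M := \begin{bmatrix} X_\alpha^* \\ X_\beta^* \end{bmatrix} \begin{bmatrix} X_\alpha & X_\beta \end{bmatrix} = \begin{bmatrix} A[\alpha] & A[\alpha,\beta] \\ A[\alpha,\beta]^* & A[\beta] \end{bmatrix} \]
is positive semidefinite, and it sits in $\mathbb{M}_2(\mathbb{M}_k)$ because $|\alpha|=|\beta|=k$.

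Next I would apply Corollary \ref{coro7} to $M$, under the identification $A\leftrightarrow A[\alpha]$, $B\leftrightarrow A[\alpha,\beta]$, $C\leftrightarrow A[\beta]$. Since $A$ is Hermitian, $A[\alpha,\beta]^*=A^*[\alpha,\beta]$, so the inequality (\ref{eq8}) specializes to
\[ \mathrm{tr}\bigl(A[\alpha]A[\beta]\bigr) + \mathrm{tr}\bigl(A^*[\alpha,\beta]A[\alpha,\beta]\bigr) \le \bigl(\mathrm{tr}A[\alpha]\bigr)\bigl(\mathrm{tr}A[\beta]\bigr) + \bigl|\mathrm{tr}A[\alpha,\beta]\bigr|^2, \]
which is precisely (\ref{eq10}).

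There is essentially no serious obstacle here; the proof is a clean reduction once one packages the relevant submatrices into a $2\times 2$ block form. The only mild subtlety to keep in mind is that $\alpha$ and $\beta$ are allowed to overlap or even coincide, so one cannot simply pick out a principal submatrix of $A$ indexed by $\alpha\cup\beta$ and declare its block structure. The factorization $A=X^*X$ sidesteps this issue uniformly, since the argument manipulates column submatrices of $X$ rather than submatrices of $A$, and the positive semidefiniteness of $M$ holds regardless of how $\alpha$ and $\beta$ meet.
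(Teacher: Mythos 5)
Your proof is correct, and it follows the same overall strategy as the paper: both arguments reduce (\ref{eq10}) to inequality (\ref{eq8}) of Corollary \ref{coro7} by establishing the key claim that
\[
M=\begin{bmatrix} A[\alpha] & A[\alpha,\beta] \\ A^*[\alpha,\beta] & A[\beta] \end{bmatrix}\ge 0 .
\]
The only real difference is how that claim is verified. The paper first assumes (after a permutation similarity) that $A$ is arranged as a $3\times 3$ block matrix whose middle block carries $\alpha\cap\beta$, and then exhibits $M$ as $*$-congruent to $A\oplus 0$; your Gram-matrix argument, writing $A=X^*X$ and $M=\begin{bmatrix} X_\alpha & X_\beta\end{bmatrix}^*\begin{bmatrix} X_\alpha & X_\beta\end{bmatrix}$, reaches the same conclusion with no rearrangement and treats overlapping or equal $\alpha,\beta$ uniformly, which is arguably cleaner than the paper's WLOG step. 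One small notational point: $A^*[\alpha,\beta]$ here denotes $(A[\alpha,\beta])^*=A[\beta,\alpha]$, i.e.\ the $(2,1)$ block of your $M$, so the identification with $\mathrm{tr}(B^*B)$ in (\ref{eq8}) is immediate and needs no separate appeal to the Hermiticity of $A$.
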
 

\begin{proof}
Without loss of generality, we may assume that 
$\alpha \cup \beta =\{1,2,\ldots ,n\}$ so that $A[\alpha\cup \beta]=A$ 
(otherwise, work within the principal submatrix $A[\alpha, \beta]$). 
By  suitable rearrangement of subscripts or 
by permutational similarity if necessary, we may further assume that 
\begin{equation*}
A=\begin{bmatrix}
A_{1,1} & A_{1,2} &A_{1,3} \\
A_{1,2}^* & A_{2,2} & A_{2,3} \\
A_{1,3}^* &A_{2,3}^* & A_{3,3}
\end{bmatrix},
\end{equation*}
and 
\[ A[\alpha ]= \begin{bmatrix}
A_{1,1} & A_{1,2} \\ A_{1,2}^* & A_{2,2}
\end{bmatrix} ,\quad 
A[\beta ] = \begin{bmatrix}
A_{2,2} & A_{2,3} \\ A_{2,3}^* & A_{3,3}
\end{bmatrix},\quad 
A[\alpha , \beta ] = \begin{bmatrix}
A_{1,2} & A_{1,3} \\ A_{2,2} & A_{2,3}
\end{bmatrix}. \] 
Since $A$ is positive semidefinite, and observe that 
\begin{align*}
 & \begin{bmatrix} 
I & 0 & 0 & 0 \\
0 & I & 0 & 0  \\
0 & -I & I & 0 \\
0 & 0 & 0 & I 
 \end{bmatrix}   
 \begin{bmatrix}
A[\alpha ] & A[\alpha ,\beta] \\
A^*[\alpha,\beta ] & A[\beta ] 
\end{bmatrix} 
\begin{bmatrix} 
I & 0 & 0 & 0 \\
0 & I & -I & 0  \\
0 & 0 & I & 0 \\
0 & 0 & 0 & I 
 \end{bmatrix}  \\
&\quad = 
\begin{bmatrix} 
A_{1,1} & A_{1,2} & 0 & A_{1,3} \\
A_{2,1} & A_{2,2} & 0 & A_{2,3}  \\
0 & 0 & 0 & 0 \\
A_{3,1} & A_{3,2} & 0 & A_{3,3} 
 \end{bmatrix}\ge 0 , 
\end{align*}
therefore, we have  
\[  \begin{bmatrix}
A[\alpha ] & A[\alpha ,\beta] \\
A^*[\alpha,\beta ] & A[\beta ] 
\end{bmatrix} \ge 0. \]
By  (\ref{eq8}) in the previous Corollary \ref{coro7}, 
the desired result (\ref{eq10}) now follows. 
\end{proof}

Using the same idea in the previous proof and combining \cite[Theorem 2.1]{Lin17} 
or (\ref{eq9}), one could also get  the following trace inequality. 

\begin{theorem} \label{thm9}
Let $A\in \mathbb{M}_n$  be positive semidefinite and 
let $\alpha ,\beta \subseteq \{1,2,\ldots ,n\}$ such that $|\alpha |=|\beta|$. Then 
\begin{align*}
 &\left|  \mathrm{tr} \bigl( A[\alpha]A[\beta]  \bigr)- 
\mathrm{tr} \bigl( A^*[\alpha, \beta] A[\alpha, \beta] \bigr)  \right| \\
&\quad \le   \bigl( \mathrm{tr} A[\alpha] \bigr) \bigl( \mathrm{tr}A[\beta] \bigr) -  
\bigl| \mathrm{tr}A[\alpha ,\beta] \bigr|^2.
\end{align*}
\end{theorem}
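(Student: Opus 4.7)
The plan is to mimic the proof of Theorem \ref{thm8}, reducing the statement to its $2\times 2$ block analogue and then applying the matching trace inequality for positive semidefinite $2\times 2$ block matrices. First I would assume without loss of generality that $\alpha\cup\beta=\{1,2,\ldots,n\}$ (otherwise pass to the principal submatrix $A[\alpha\cup\beta]$, which remains positive semidefinite), and then, via a suitable permutational similarity, write $A$ in the same $3\times 3$ block form used for Theorem \ref{thm8}, identifying $A[\alpha]$, $A[\beta]$ and $A[\alpha,\beta]$ with the specific $2\times 2$ assemblies of the blocks $A_{i,j}$.

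The very same congruence that appears in the proof of Theorem \ref{thm8} then yields
\[
\begin{bmatrix} A[\alpha] & A[\alpha,\beta] \\ A^*[\alpha,\beta] & A[\beta] \end{bmatrix}\ge 0,
\]
so the problem is reduced to showing that, for every positive semidefinite $2\times 2$ block matrix $\begin{bmatrix}A & B\\ B^* & C\end{bmatrix}$,
\[
\bigl|\mathrm{tr}(AC)-\mathrm{tr}(B^*B)\bigr|\le \mathrm{tr}(A)\mathrm{tr}(C)-|\mathrm{tr}(B)|^2.
\]
One half of this estimate, namely $\mathrm{tr}(B^*B)-\mathrm{tr}(AC)\le \mathrm{tr}(A)\mathrm{tr}(C)-|\mathrm{tr}(B)|^2$, is precisely inequality (\ref{eq9}) of Corollary \ref{coro7}. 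For the opposite sign direction, $\mathrm{tr}(AC)-\mathrm{tr}(B^*B)\le \mathrm{tr}(A)\mathrm{tr}(C)-|\mathrm{tr}(B)|^2$, I would invoke \cite[Theorem 2.1]{Lin17}, which supplies exactly this dual trace inequality. Combining the two halves yields the absolute-value bound at the $2\times 2$ block level, and the reduction above transfers it back to arbitrary index sets $\alpha,\beta$.

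The main obstacle is precisely this second direction. The positive semidefinite witness $G$ used in Corollary \ref{coro7} only yields, via trace and scalar-matrix congruences with $(I,\pm I)^{\top}$, the two estimates (\ref{eq8}) and (\ref{eq9}); no obvious sign-flip or rescaling of $\Phi$ or $\Psi$ in the Choi-type construction produces the dual bound, so an external input is genuinely required here. If one wished to avoid appealing to \cite[Theorem 2.1]{Lin17}, one would need to exhibit an independent positive semidefinite witness whose trace delivers $\mathrm{tr}(A)\mathrm{tr}(C)+\mathrm{tr}(B^*B)\ge \mathrm{tr}(AC)+|\mathrm{tr}(B)|^2$ directly; but granted that dual inequality, everything else in the argument is a verbatim repeat of the reduction used in Theorem \ref{thm8}.
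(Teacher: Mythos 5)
Your proposal matches the paper's own (only sketched) argument: the paper likewise derives Theorem \ref{thm9} by "the same idea" as the reduction in Theorem \ref{thm8}, combined with \cite[Theorem 2.1]{Lin17} and (\ref{eq9}). Your identification of the two one-sided bounds needed at the $2\times 2$ block level --- (\ref{eq9}) supplying one direction and \cite[Theorem 2.1]{Lin17} the other --- is exactly the intended route, so the approaches coincide.
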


\section*{Acknowledgments}
The first  author would like to thank Minghua Lin  
for bringing the question to his attention. 
All authors are grateful for valuable comments from the referee, 
which considerably improve the presentation of our manuscript.
This work was supported by  NSFC (Grant No. 11671402, 11871479),  
Hunan Provincial Natural Science Foundation (Grant No. 2016JJ2138, 2018JJ2479) 
and  Mathematics and Interdisciplinary Sciences Project of CSU.

\end{document}